\newtheorem{thm}{Theorem}%[section]
\newtheorem{lem}[thm]{Lemma}
\theoremstyle{definition}
\newtheorem{exm}[thm]{Example}
\DeclareMathOperator{\Z}{\mathbb {Z}}
\DeclareMathOperator{\ch}{char}
\DeclareMathOperator{\reg}{reg}
\DeclareMathOperator{\mm}{min-match}
\DeclareMathOperator{\cochord}{cochord}
\begin{document}

\title[Regularity, matchings and Cameron-Walker graphs]{Regularity, matchings and Cameron-Walker graphs}

\author{Tran Nam Trung}
\address{Institute of Mathematics, VAST, 18 Hoang Quoc Viet, Hanoi, Viet Nam, and Institute of Mathematics and Applied Sciences, Thang Long University, Nguyen Xuan Yem road, Hoang Mai district, Hanoi, Viet Nam}

\email{tntrung@math.ac.vn}
\subjclass{13D45, 05C90, 05E40, 05E45.}
\keywords{Regularity, Edge ideal, Matching, Cameron-Walker graph.}
\date{}

\dedicatory{}
\commby{}
%-----------------------------------------------------------
\begin{abstract} Let $G$ be a simple graph and let $\nu(G)$ be the matching number of $G$. It is well-known that $\reg I(G) \leqslant \nu(G)+1$. In this paper we show that $\reg I(G) = \nu(G)+1$ if and only if every connected component of $G$ is either a pentagon or a Cameron-Walker graph.
\end{abstract}

% -----------------------------------------------------------
\maketitle
% -----------------------------------------------------------
\section*{Introduction}

Let $G$ be a graph with vertex set $\{1, \ldots, n\}$, and let $R := k[x_1, \ldots , x_n]$ be the polynomial ring over a field $k$. We associated to $G$ an ideal in $R$
$$I(G) = (x_ix_j \mid \{i,j\} \text{ is an edge of } G)$$
which is called the {\it edge ideal} of $G$.

Castelnuovo-Mumford regularity of a homogeneous ideal $I$ in $R$, denoted by $\reg(I)$, is an important algebraic invariant which
measures the complexity of the ideal $I$.  Finding bounds for the regularity of $I(G)$ in terms of combinatorial data of $G$ is an active research program in combinatorial commutative algebra in recent years (see \cite{Ha} and references therein). 

Throughout the paper we assume that $G$ has a least one edge unless otherwise stated. Let $\nu_0(G)$ be the induced matching number of $G$. Katzman \cite{K} showed that
\begin{equation}\label{EQ1}
\reg  (I(G)) \geqslant \nu_0(G)+1.
\end{equation}
There are many classes of graphs $G$ for which the equality occurs (see \cite[Theorem $4.12$]{ABH} for the survey).

For upper bounds, H\`{a} and Van Tuyl \cite{HT} obtained   
\begin{equation}\label{EQ2}
\reg(I(G)) \leqslant \nu(G)+1
\end{equation}
where $\nu(G)$ is the {\it matching number} of $G$. This bound is improved by Woodroofe \cite{W} as follows.  A graph $G$ is {\it chordal} if every induced cycle in $G$ has length $3$, and is {\it co-chordal} if the complement graph $G^c$ of $G$ is chordal. The {\it co-chordal cover number}, denoted $\cochord(G)$, is the minimum number of co-chordal subgraphs required to cover the edges of $G$. Then, 
$$\reg(I(G)) \leqslant \cochord(G)+1.$$

In the paper we interested in graph-theoretically classifying $G$ such that the equality occurs in each bound above. More precisely,

\medskip

\noindent {\bf Problem:} Classify graph-theoretically graphs $G$ such that 
\begin{enumerate}
\item $\reg I(G)  = \nu_0(G)+1$.
\item $\reg I(G)  = \nu(G)+1$.
\item $\reg I(G)  = \cochord(G)+1$.
\end{enumerate}

\medskip

It is worth mentioning that there is a graph $G$ (see Example \ref{E1}) such that the equality $\reg(G) = \nu_0(G)+1$ (resp. $\reg(G) = \cochord(G)+1$) is dependent on the characteristic of the field $k$. Thus we cannot solve Problems $1$ and $3$ without taking into account the characteristic of the based field.

The main result of the paper is to settle Problem $2$. Note that this problem is asked in \cite{ABH}. At first sight when $\nu_0(G) = \nu(G)$, we have $\reg(I(G)) = \nu(G)+1$ by Inequalities $(\ref{EQ1})$ and $(\ref{EQ2})$.  The graph $G$ satisfies $\nu(G)=\nu_0(G)$ is called a Cameron-Walker graph (after Hibi et al. \cite{HHKO}), which is classified in \cite{CaWa, HHKO} as follows.

\begin{thm}{\rm (\cite[Theorem 1]{CaWa} or \cite[p. 258]{HHKO})} \label{CaWa} A connected graph $G$ is Cameron-Walker if and only if it is one of the following graphs (see Figure $1$):
\begin{enumerate}
\item a star;
\item s star triangle; 
\item a graph consisting of a connected bipartite graph with a bipartition partition $(X ,Y)$ such that there is at least one leaf edge attached to each vertex $x \in X$ and that there may be possibly some pendant triangles attached to each vertex $y \in Y$.
\end{enumerate}
\end{thm}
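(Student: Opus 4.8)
The plan is to prove both implications, the forward (classification) direction carrying essentially all the work. For the easy direction I would verify directly that each of the three families has $\nu_0(G)=\nu(G)$. In a star only one edge fits in any matching, so $\nu_0=\nu=1$; in a star triangle (triangles sharing a common vertex $c$) the edges opposite to $c$ form a matching that is simultaneously induced and maximum, so $\nu_0=\nu$ equals the number of triangles; and in type $(3)$ I would take the matching pairing each $x\in X$ with one of its leaves together with the base edge of each pendant triangle. Since leaves have degree $1$ and the two non-apex vertices of a pendant triangle meet the rest of the graph only through their apex, this matching is induced, and a short König-type vertex-cover count shows it is maximum; hence $\nu_0=\nu$.

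For the hard direction I would fix a maximum induced matching $M=\{e_1,\dots,e_k\}$ with $e_i=\{a_i,b_i\}$ and $k=\nu_0(G)=\nu(G)$, so that $M$ is at once a maximum matching. Maximality forces the basic structure: $V(M)$ is a vertex cover (otherwise an uncovered edge would enlarge $M$), so $W:=V(G)\setminus V(M)$ is independent, and because $M$ is induced the only edges inside $V(M)$ are the $e_i$. The heart of the argument is then a dichotomy for each matching edge. If $a_i$ had a neighbor $u\in W$ and $b_i$ a neighbor $v\in W$ with $u\neq v$, then $(M\setminus\{e_i\})\cup\{\{a_i,u\},\{b_i,v\}\}$ would be a matching of size $k+1$, contradicting $\nu(G)=k$. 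Hence for every $i$ either at most one endpoint of $e_i$ meets $W$, or both endpoints share a unique common neighbor $u\in W$ and no other, so $\{a_i,b_i,u\}$ is a triangle. In the first case, if neither endpoint meets $W$ then $e_i$ is an isolated $K_2$ and $G=K_2$; otherwise exactly one endpoint is active and its partner is a pendant leaf. Thus every edge of $M$ is either a leaf edge or the base of a pendant triangle with apex in $W$.

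I would then assemble the global picture. Let $X$ be the set of active endpoints of the leaf-type edges; distinct such endpoints are non-adjacent inside $V(M)$, so all their remaining adjacencies run into the independent set $W$, and the edges between $X$ and $W$ form a bipartite graph. Connectivity supplies the rest: the non-apex vertices of each triangle see the rest of $G$ only through the apex, and apices lie in the independent set $W$, hence are mutually non-adjacent. If $X\neq\emptyset$, connectivity forces every apex to be adjacent to some vertex of $X$, while the leaves and triangles are genuinely pendant, so the bipartite graph on $(X,W)$ inherits the connectivity of $G$; taking $Y:=W$, with each $x\in X$ carrying its matched leaf and each apex carrying its pendant triangle, gives exactly the type $(3)$ description (the star appearing as the boundary case). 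If $X=\emptyset$, then only triangle edges occur, and since the apices cannot be joined to one another, connectivity forces a single common apex, i.e.\ a star triangle.

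The main obstacle I expect is not the enlargement lemma but the bookkeeping in this final assembly: one must check that designating one matched leaf per $x\in X$ and partitioning $W$ into leaves and $Y$-vertices genuinely reproduces the conditions ``at least one leaf edge at each $x$'' and ``pendant triangles only at vertices of $Y$'', and that no stray edges (between two apices, or between an apex and a leaf) can survive. Tracking these degenerate overlaps carefully — in particular recognizing the star and the star triangle as the boundary cases $Y=\emptyset$ and $X=\emptyset$ — is where the argument demands the most attention.
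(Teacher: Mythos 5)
Your proposal is sound, but note first a structural point: the paper does not prove this theorem at all. It is quoted as a known classification, with the proof delegated to Cameron--Walker \cite{CaWa} and to Hibi--Higashitani--Kimura--O'Keefe \cite{HHKO} (who corrected the original statement); so there is no internal proof to compare against, and what you have written is a self-contained replacement for that citation. Your argument is the standard direct one and its skeleton is correct: since $\nu_0(G)=\nu(G)$, a maximum induced matching $M$ is simultaneously a maximum matching, so $V(M)$ is a vertex cover, $W=V(G)\setminus V(M)$ is independent, and the only edges inside $V(M)$ are the edges of $M$; the swap argument (replacing $e_i$ by $\{a_i,u\},\{b_i,v\}$ with $u\neq v$ in $W$) correctly forces each matching edge to be either a pendant leaf edge, an isolated $K_2$, or the base of a pendant triangle whose apex is its endpoints' unique common neighbor in $W$; and connectivity then yields the trichotomy star / star triangle / type $(3)$ with $Y:=W$. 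Two small points deserve tightening. First, in the easy direction for type $(3)$ the phrase ``K\"onig-type vertex-cover count'' is not quite right, since the graph is non-bipartite and a vertex cover must spend two vertices per triangle; the correct count is simply that any matching contains at most $|X|$ edges meeting $X$ and at most one edge from each pendant triangle, giving $\nu\leqslant |X|+t$. Second, the claim that the bipartite graph on $(X,W)$ ``inherits the connectivity of $G$'' needs the (easy) observation that a simple path between two vertices of $X\cup W$ can never pass through a matched leaf or through the two base vertices of a pendant triangle, since entering such a triangle forces the path to return to the apex it came from; you flag this bookkeeping yourself, and it is routine to fill in.
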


\medskip

\begin{center}

\includegraphics[scale=0.7]{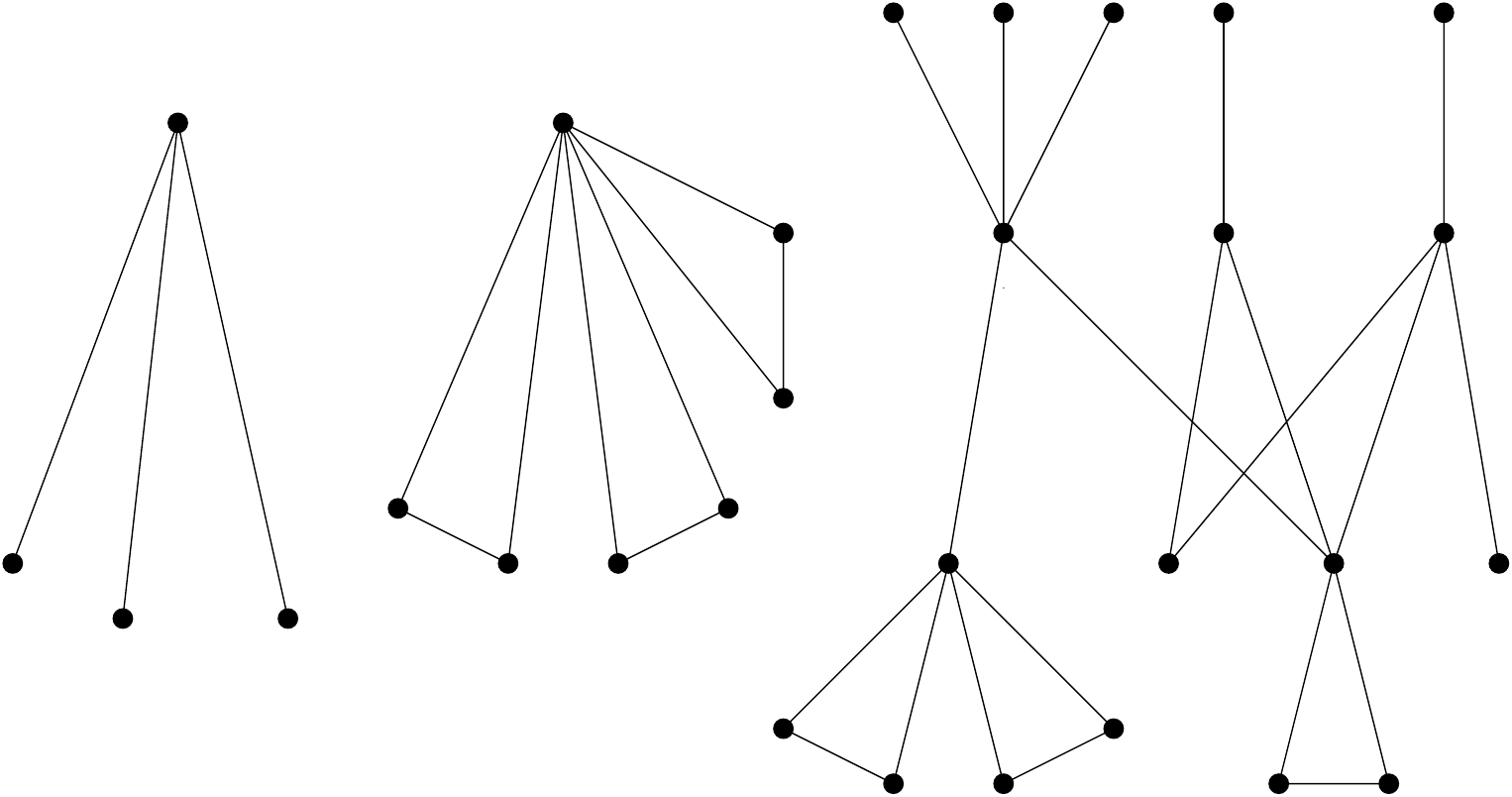}\\
\medskip

{\it Figure $1$. Three kinds of connected Cameron-Walker graphs}
\end{center}

\medskip

 Recall that a pentagon is a cycle of length $5$. Then, the main result of the paper is the following.

\medskip

\noindent {\bf Theorem \ref{main-result}}. {\it Let $G$ be a graph. Then, $\reg(I(G)) = \nu(G)+1$ if and only if each connected component of $G$ is either a pentagon or a Cameron-Walker graph.
}

\medskip

Our paper is structured as follows. In Sect. $1$, we collect notations and terminology used in the paper, and recall a few auxiliary results.
In Sect. $2$, we settle Problem $2$.

\section{Preliminaries}

Let $k$ be a field, and let $R := k[x_1,\ldots,x_n]$ be a standard graded polynomial ring of $n$ variables over $k$. The object of our work is the Castelnuovo-Mumford regularity of graded modules and ideals over $R$. This invariant can be defined via the minimal free resolution. Let $M$ be a finitely generated graded nonzero $R$-module and let
$$0 \rightarrow \bigoplus_{j\in\Z} R(-j)^{\beta_{p,j}(M)} \rightarrow \cdots \rightarrow \bigoplus_{j\in\Z}R(-j)^{\beta_{0,j}(M)}\rightarrow 0$$
be the minimal free resolution of $M$. Then, 
$$\reg(M) = \max\{j-i\mid \beta_{i,j}(M)\ne 0\}.$$

Let $G$ be a finite simple graph. We use the symbols $V(G)$ and $E(G)$ to denote the vertex set and the edge set of $G$, respectively. The algebra-combinatorics framework used in this paper is described via the edge ideal construction. Assume that $V(G)=\{1,\ldots,n\}$. The edge ideal of $G$ is define by
$$I(G) = (x_ix_j \mid \{i,j\} \in E(G)) \subset R.$$

For simplicity, in the sequel, we write $\reg(G)$ to means:
\begin{enumerate}
\item If $G$ has at least one edge, then $\reg(G) = \reg(I(G))$.
\item If $G$ is totally disconnected, then $\reg(G) = 1$.
\item If $G$ is empty, i.e. $V(G)=\emptyset$, then $\reg(G)=0$.
\end{enumerate}

The complementary graph $G^c$ of $G$ is the graph whose vertex set is again $V(G)$ and whose edges are the non-edges of $G$. A graph $G$ is called {\it chordal} if each cycle of length at least $4$ has a chord. We recall the following result of Fr\"{o}berg.

\begin{lem} {\rm (\cite[Theorem 1]{F})} \label{FL4} $\reg(G) = 1$ if and only if $G^c$  is chordal.
\end{lem}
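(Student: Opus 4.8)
The plan is to pass from the edge ideal to the independence complex of $G$ and to read off the regularity from Hochster's formula. Write $\Delta$ for the independence complex of $G$, whose faces are the independent sets of $G$; since a set is independent in $G$ precisely when it is a clique in $G^c$, the complex $\Delta$ is the clique (flag) complex of $G^c$, and $I(G)$ is its Stanley--Reisner ideal. Moreover, for $W \subseteq V(G)$ the induced subcomplex $\Delta_W$ is the clique complex of the induced subgraph $(G^c)[W]$. The content of the lemma is that $\reg(G) = 1$ holds exactly when $I(G)$ has a linear resolution, i.e. when $\beta_{i,j}(R/I(G)) = 0$ for all $j - i \geqslant 2$.

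First I would invoke Hochster's formula, which expresses the graded Betti numbers as
\[
\beta_{i,j}(R/I(G)) = \sum_{W \subseteq V(G),\, |W| = j} \dim_k \h_{j-i-1}(\Delta_W; k).
\]
Combining this with the previous reformulation reduces the lemma to the purely topological statement: $\reg(G) = 1$ if and only if $\h_\ell(\Delta_W; k) = 0$ for every $W \subseteq V(G)$ and every $\ell \geqslant 1$. Thus it suffices to prove that the clique complex of $G^c$ and all of its induced subcomplexes have vanishing reduced homology in positive degrees if and only if $G^c$ is chordal.

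For the ``only if'' direction I would argue contrapositively. If $G^c$ is not chordal it contains an induced cycle $C_m$ with $m \geqslant 4$. Taking $W$ to be the vertex set of this cycle, the induced subgraph $(G^c)[W]$ equals $C_m$, which has no triangles, so $\Delta_W$ is the $1$-dimensional complex consisting of the cycle itself; this is homeomorphic to a circle and hence $\h_1(\Delta_W; k) \neq 0$. By the displayed equivalence this forces $\reg(G) \geqslant 2$.

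The main work, and the step I expect to be the chief obstacle, is the ``if'' direction: showing that chordality of $G^c$ forces the homology vanishing. Since induced subgraphs of a chordal graph are again chordal, it is enough to prove that the clique complex $\Delta$ of a chordal graph $H$ satisfies $\h_\ell(\Delta;k) = 0$ for all $\ell \geqslant 1$, and I would do this by induction on the number of vertices of $H$. By Dirac's theorem a chordal graph has a simplicial vertex $v$, so its closed neighborhood $N[v]$ is a clique; then $\Delta$ is the union of the subcomplex $\Delta'$ generated by $H \setminus v$ and the star of $v$, the latter being the full simplex on $N[v]$ and hence contractible, with intersection the full simplex on $N(v)$, again contractible when $N(v) \neq \emptyset$. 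Feeding this into the reduced Mayer--Vietoris sequence, together with the inductive vanishing of $\h_\ell(\Delta')$ in positive degrees, yields $\h_\ell(\Delta) = 0$ for all $\ell \geqslant 1$. The delicate points to handle carefully are the base cases, the bookkeeping for $\ell = 1$ (where the reduced $\h_0$ of the intersection enters), and the case of an isolated simplicial vertex, where $N(v) = \emptyset$ and $v$ splits off as a separate component; in all of these the exceptional contributions affect only $\h_0$, which is permitted to be nonzero and corresponds precisely to the linear strand of the resolution.
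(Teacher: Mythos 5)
The paper offers no proof of this lemma for you to be compared against: it is quoted directly from Fr\"{o}berg \cite{F} and used as a black box. Judged on its own merits, your argument is correct and is essentially the standard modern proof of Fr\"{o}berg's theorem. The reduction via Hochster's formula to the purely topological statement that every induced subcomplex of the clique complex of $G^c$ has vanishing reduced homology in degrees $\geqslant 1$ is exactly right; the induced cycle $C_m$ ($m \geqslant 4$), whose clique complex is a circle, settles the forward direction; and for the converse, Dirac's simplicial vertex together with the Mayer--Vietoris decomposition of $\Delta$ into the clique complex of $H\setminus v$ and the star of $v$ (both the star and the intersection being full simplices) gives a complete induction, with the isolated-vertex case correctly relegated to $\h_0$, which only feeds the linear strand. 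Two remarks. First, there is an off-by-one inconsistency in the paper itself: under its stated convention $\reg(G)=\reg(I(G))$, the equality $\reg(G)=1$ can never hold for a graph with an edge, and what the lemma really asserts is $\reg(R/I(G))=1$, equivalently that $I(G)$ has a linear resolution, equivalently $\reg(I(G))=2$; this is precisely how the lemma is applied in the proof of Lemma \ref{C2}, and your reading of the statement is the correct one. Second, your argument proves the equivalence over an arbitrary field, which is worth making explicit in this paper's context, since characteristic-independence is exactly what fails for the other equalities discussed in Example \ref{E1}.
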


A {\it matching} in a graph $G$ is a subgraph consisting of pairwise disjoint edges.  If the subgraph is an induced subgraph, the matching is an {\it induced matching}. A matching of $G$ is {\it maximal} if it is maximal with respect to inclusion. The {\it matching number} of $G$, denoted $\nu(G)$, is the size of a maximum matching; that is, the maximum number of pairwise disjoint edges; the minimum cardinality of the maximal matchings of $G$ is the {\it minimum matching number} of $G$ and is denoted by $\mm(G)$; and the {\it induced matching number} of  $G$, denoted by $\nu_0(G)$, is  the size of  a maximum induced matching. It follows from  \cite[Theorem $2$]{W} that:

\begin{lem} \label{UB} $\reg(G) \leqslant \mm(G)+1$.
\end{lem}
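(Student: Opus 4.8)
The plan is to deduce this bound from Woodroofe's cochordal cover inequality $\reg(I(G)) \leqslant \cochord(G)+1$ (\cite[Theorem $2$]{W}) by establishing the purely combinatorial estimate $\cochord(G) \leqslant \mm(G)$. To this end I would fix a maximal matching $M = \{e_1,\ldots,e_s\}$ of $G$ of smallest possible cardinality, so that $s = \mm(G)$, and write $e_i = \{a_i,b_i\}$. Because $M$ is maximal, no edge of $G$ can be disjoint from every edge of $M$; equivalently, the $2s$ vertices $a_1,b_1,\ldots,a_s,b_s$ form a vertex cover of $G$, so every edge of $G$ is incident to at least one of them.

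Next I would build an explicit cover of $E(G)$ by $s$ subgraphs. For each $i$, let $G_i$ be the subgraph of $G$ on the full vertex set $V(G)$ whose edges are exactly those edges of $G$ incident to $a_i$ or $b_i$. By the preceding paragraph these subgraphs cover all edges of $G$, so it remains to verify that each $G_i$ is co-chordal, that is, that $G_i^c$ is chordal; this is the heart of the argument.

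For the co-chordality of $G_i$, I would exploit its very restricted shape. Every edge of $G_i$ meets $\{a_i,b_i\}$, so the remaining vertices $W := V(G) \setminus \{a_i,b_i\}$ span no edge of $G_i$ and hence form a clique in $G_i^c$; moreover $a_ib_i = e_i$ is an edge of $G_i$, so $a_i$ and $b_i$ are non-adjacent in $G_i^c$. I would then rule out induced cycles of length at least $4$ in $G_i^c$. Such a cycle contains at most two vertices outside $W$, since only $a_i$ and $b_i$ are available, and it cannot contain three vertices of $W$, because those would be pairwise adjacent in the clique, which is impossible for three vertices lying on an induced cycle of length at least $4$ (any three such vertices include a non-consecutive, hence non-adjacent, pair). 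The only remaining case is an induced $4$-cycle through $a_i$, $b_i$ and exactly two vertices $w_1,w_2 \in W$; but there $a_i,b_i$ are non-adjacent and so must occupy one diagonal of the cycle, forcing $w_1,w_2$ onto the other diagonal and hence to be non-adjacent in $G_i^c$, contradicting that $W$ is a clique. Thus $G_i^c$ is chordal.

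Combining the pieces yields $\cochord(G) \leqslant s = \mm(G)$, and together with $\reg(I(G)) \leqslant \cochord(G)+1$ this gives $\reg(G) \leqslant \mm(G)+1$. I expect the only delicate point to be the case analysis in the previous paragraph confirming that the ``double star'' subgraphs $G_i$ are genuinely co-chordal; the covering property and the reduction to a vertex cover are routine once a minimum maximal matching is fixed.
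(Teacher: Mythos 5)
Your proposal is correct, and it is essentially the paper's approach: the paper proves Lemma \ref{UB} simply by citing Woodroofe \cite[Theorem 2]{W}, and your argument---covering $E(G)$ by the ``double star'' subgraphs around the edges of a minimum maximal matching, verifying each such subgraph is co-chordal, and then applying $\reg(I(G)) \leqslant \cochord(G)+1$ to get $\cochord(G) \leqslant \mm(G)$ and hence the bound---is precisely the proof of that cited theorem. You have merely supplied the details the paper delegates to the reference, and all the details (the vertex-cover property of a maximal matching and the case analysis showing the complement of a double star has no induced cycle of length at least $4$) check out.
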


When there is no confusion, the edge $\{u,v\}$ of $G$ we simply write $uv$. For a vertex $u$ in a graph $G$, let $N_G(u) := \{v \in V(G) \mid uv \in E\}$ be the set of neighbors of $u$, and set $N_G[u] := N_G(u) \cup \{u\}$. An edge $e$ is incident to a vertex $u$ if $u \in e$. The degree of a vertex $u \in V(G)$ , denoted by $\deg_G(u)$, is the number of edges incident to $u$. 

For an edge $e$ in a graph $G$, define $G\setminus e$ to be the subgraph of G with the edge $e$ deleted (but its vertices remained). For a subset $W \subseteq V(G)$ of the vertices in $G$, define $G[W]$ be the induced subgraph of $G$ on $W$ and $G \setminus W$ to be the subgraph of G with the vertices in $W$ (and their incident edges) deleted. When $W = \{u\}$ consists of a single vertex, we write $G\setminus u$ stands for $G\setminus \{u\}$. Define $G_u := G\setminus N_G[u]$. If $e = \{u,v\}$, then $G_e$ to be the subgraph $G\setminus(N_G[u]\cup N_G[v])$ of $G$.

In the study of the regularity of edge ideals, the following lemmas enable us to do induction on the number of vertices and edges.

\begin{lem} \label{FL1} {\rm (\cite[Lemma $3.1$]{Ha})} Let $H$ be an induced subgraph of $G$. Then, $$\reg(H) \leqslant \reg(G).$$
\end{lem}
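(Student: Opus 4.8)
The plan is to reduce the inequality to a purely combinatorial statement about simplicial homology via Hochster's formula, where it becomes transparent. Since the edge ideal $I(G)$ is a squarefree monomial ideal, it is the Stanley--Reisner ideal of the independence complex $\mathrm{Ind}(G)$, whose faces are exactly the independent sets of $G$. The decisive observation is that passing to an induced subgraph is compatible with restricting the complex: for $W_0 \subseteq V(G)$ and $H = G[W_0]$, a subset $S \subseteq W_0$ is independent in $H$ if and only if it is independent in $G$, so $\mathrm{Ind}(H)$ is precisely the induced subcomplex $\mathrm{Ind}(G)[W_0]$, and more generally $\mathrm{Ind}(H)[W] = \mathrm{Ind}(G)[W]$ for every $W \subseteq W_0$. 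It is here that the hypothesis ``induced'' (as opposed to merely spanning) subgraph is genuinely used.

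Next I would invoke Hochster's formula, which computes the graded Betti numbers of a Stanley--Reisner ring through reduced simplicial homology of induced subcomplexes: for $\Delta = \mathrm{Ind}(G)$ one has $\beta_{i,j}(R/I(G)) = \sum_{|W|=j} \dim_k \widetilde{H}_{j-i-1}(\Delta[W];k)$, the sum running over $W \subseteq V(G)$. Rewriting the definition of regularity through this formula and substituting $\ell = j-i-1$ yields the clean description
\[
\reg(R/I(G)) = 1 + \max\{\ell \mid \widetilde{H}_{\ell}(\mathrm{Ind}(G)[W];k) \ne 0 \text{ for some } W \subseteq V(G)\},
\]
together with the normalization $\reg(I(G)) = \reg(R/I(G)) + 1$. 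Thus the regularity is governed entirely by the degrees in which the induced subcomplexes of $\mathrm{Ind}(G)$ carry reduced homology.

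With these two ingredients the conclusion is immediate: by the first observation the family $\{\mathrm{Ind}(H)[W] \mid W \subseteq W_0\}$ of induced subcomplexes of $\mathrm{Ind}(H)$ is contained in the family $\{\mathrm{Ind}(G)[W] \mid W \subseteq V(G)\}$ of induced subcomplexes of $\mathrm{Ind}(G)$. Hence the maximum defining $\reg(R/I(H))$ is taken over a subcollection of the terms defining $\reg(R/I(G))$, so it can only be smaller, and after adding the two normalizing constants we obtain $\reg(H) = \reg(I(H)) \le \reg(I(G)) = \reg(G)$. The degenerate cases where $H$ is totally disconnected or empty are immediate from the stated conventions for $\reg$.

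I expect the main obstacle to be bookkeeping rather than mathematical depth: one must fix a single index convention in Hochster's formula and carry it consistently through the passage to the $\max$. An alternative, more elementary route avoids Hochster's formula entirely: delete one vertex $v$ at a time, use the short exact sequence $0 \to (R/(I(G):x_v))(-1) \xrightarrow{x_v} R/I(G) \to R/(I(G),x_v) \to 0$ with leftmost, middle and rightmost terms $A,B,C$, together with the standard bound $\reg C \le \max\{\reg B, \reg A - 1\}$, and then identify $R/(I(G),x_v)$ with the edge ring of $G\setminus v$ and $(I(G):x_v)$ with an ideal whose regularity equals $\reg(G_v)-1$ for the induced subgraph $G_v = G\setminus N_G[v]$. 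The difficulty there is that one must run an induction on the number of vertices to control the colon term, making the argument longer and more delicate than the homological one; for that reason I would present the Hochster-formula proof.
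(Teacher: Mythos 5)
Your proof is correct. The paper offers no argument of its own for this lemma---it is quoted directly from \cite[Lemma 3.1]{Ha}---and your argument (identify $I(G)$ with the Stanley--Reisner ideal of the independence complex, observe that $\mathrm{Ind}(G[W_0])[W]=\mathrm{Ind}(G)[W]$ for $W\subseteq W_0$, and restrict the maximum in Hochster's formula to a subfamily of induced subcomplexes) is precisely the standard proof given in that reference, so the two approaches coincide.
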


\begin{lem} \label{FL2} {\rm (\cite[Lemma $2.10$]{DHS})} Let $x$ be a vertex of a graph $G$. Then, 
$$\reg(G) \in \{\reg(G\setminus x), \reg(G_x)+1\}.$$
\end{lem}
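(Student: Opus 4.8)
The plan is to pass to commutative algebra and prove the equivalent statement
$$\reg(R/I(G)) \in \{\reg(R/I(G\setminus x)),\ \reg(R/I(G_x))+1\},$$
which is the same assertion once one uses $\reg(G) = \reg(I(G)) = \reg(R/I(G))+1$; the degenerate conventions of Section $1$ for edgeless graphs are designed precisely so that this translation, and the formula itself, remain valid, and I would check those boundary cases separately as a routine matter. Write $I := I(G)$ and let $x_i$ be the variable of $R$ attached to the vertex $x$. The engine of the argument is the short exact sequence of graded $R$-modules
$$0 \to (R/(I:x_i))(-1) \xrightarrow{\ \cdot\, x_i\ } R/I \to R/(I,x_i) \to 0,$$
whose first map is multiplication by $x_i$ (injective, with cokernel $R/(I+x_iR)$, the degree shift recording that $x_i$ has degree $1$).

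First I would identify the two outer terms graph-theoretically. Setting $x_i=0$ kills every generator of $I$ meeting $x$, so $R/(I,x_i) \cong R'/I(G\setminus x)$ with $R'=k[x_j : j\ne i]$, whence $\reg(R/(I,x_i)) = \reg(R/I(G\setminus x))$. For the colon, $x_i m \in I$ forces $m$ to be divisible either by some $x_j$ with $j\in N_G(x)$ or by a monomial $x_ax_b$ coming from an edge of $G$ avoiding $x$; hence $(I:x_i) = (x_j : j\in N_G(x)) + I(G\setminus x)$, and modding out the neighbor variables leaves exactly the edges supported on $V(G)\setminus N_G[x]$ together with the free variable $x_i$. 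Since adjoining a polynomial variable does not change regularity, $\reg(R/(I:x_i)) = \reg(R/I(G_x))$. Thus the two candidate values in the displayed dichotomy are precisely $\reg(R/(I,x_i))$ and $\reg(R/(I:x_i))+1$.

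It remains to show $\reg(R/I)$ coincides with one of these, and I would combine two sources of information. From the short exact sequence come the standard comparisons $\reg(R/I) \le \max\{\reg(R/(I:x_i))+1,\ \reg(R/(I,x_i))\}$ and $\reg(R/(I:x_i))+1 \le \max\{\reg(R/I),\ \reg(R/(I,x_i))+1\}$. From Lemma~\ref{FL1}, applied to the induced subgraphs $G\setminus x$ and $G_x$, come the lower bounds $\reg(R/I(G\setminus x)) \le \reg(R/I)$ and $\reg(R/I(G_x)) \le \reg(R/I)$. Writing $a := \reg(R/(I:x_i))+1$, $b := \reg(R/I)$ and $c := \reg(R/(I,x_i))$, these four facts read $b \le \max\{a,c\}$, $a \le \max\{b,c+1\}$, $c \le b$ and $a-1 \le b$.

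The heart of the matter — and the only real obstacle — is that these four inequalities force $b \in \{a,c\}$; the danger is the boundary configuration in which $\reg(R/I)$ might a priori drop strictly below both candidates, a gap that the pure short-exact-sequence inequalities alone do not close and that is sealed by the induced-subgraph bounds. I would finish with a short case split. If $a\le c$, then $b \le \max\{a,c\}=c\le b$, so $b=c$. If $a>c$, then $b\le a$ while $a-1\le b$, so $b\in\{a-1,a\}$; and in the case $b=a-1$ the inequality $a\le\max\{b,c+1\}$ forces $c+1\ge a$, i.e. $c\ge a-1=b$, which with $c\le b$ gives $c=b$. In every case $b$ equals $a$ or $c$, and translating back through the degree shift yields $\reg(G)\in\{\reg(G\setminus x),\ \reg(G_x)+1\}$.
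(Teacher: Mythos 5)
Your proposal is correct, but the paper itself contains no proof of this lemma: it is imported verbatim from \cite{DHS}, so the only meaningful comparison is with the argument in that reference, and yours is essentially that standard argument. All the key steps check out: the exact sequence $0 \to (R/(I:x_i))(-1) \to R/I \to R/(I,x_i) \to 0$, the identifications $R/(I,x_i)\cong R'/I(G\setminus x)$ and $(I:x_i)=(x_j : j\in N_G(x))+I(G_x)$ (with $x_i$ surviving as a free variable, which is harmless for regularity), the two short-exact-sequence estimates, and the use of Lemma \ref{FL1} to supply the lower bounds $c\le b$ and $a-1\le b$. Your closing case split ($a\le c$ forces $b=c$; if $a>c$ and $b=a-1$, then $a\le\max\{b,c+1\}$ forces $c=b$) is precisely what closes the only possible gap, namely that $b$ might land strictly between the two candidates. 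Two remarks. First, the two inequalities you quote for a short exact sequence $0\to A\to B\to C\to 0$, namely $\reg(B)\le\max\{\reg(A),\reg(C)\}$ and $\reg(A)\le\max\{\reg(B),\reg(C)+1\}$, should be justified or cited; they follow at once from the long exact sequence of local cohomology, so this is presentational. Second, your assurance that the paper's degenerate conventions were ``designed precisely'' to make the translation valid is wrong for convention (3): with $\reg(\emptyset)=0$, the lemma as literally stated fails whenever $N_G[x]=V(G)$ and $\reg(G)\ne\reg(G\setminus x)$ --- already for a single edge $G$ on $\{x,y\}$ one has $\reg(G)=2$ while $\reg(G\setminus x)=1$ and $\reg(G_x)+1=1$. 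What you actually prove (the algebraic statement, whose second candidate value is $\reg(R/(I:x_i))+2=2$ in that case) is the correct assertion of \cite{DHS}; the mismatch is a defect of the paper's empty-graph convention rather than of your argument, but the ``routine'' boundary check you defer would have exposed this inconsistency rather than confirmed it.
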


\begin{lem}\label{FL3} {\rm (\cite[Theorem $3.5$]{Ha})} Let $e$ be an edge of $G$. Then, 
$$\reg(G) \leqslant \{\reg(G\setminus e), \reg(G_e)+1\}.$$
\end{lem}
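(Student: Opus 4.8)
The plan is to derive the inequality from a short exact sequence that isolates the chosen edge, combined with the standard behaviour of Castelnuovo--Mumford regularity along short exact sequences. Write $e=\{u,v\}$ and set $J:=I(G\setminus e)$. Deleting $e$ removes exactly the generator $x_ux_v$, so $I(G)=J+(x_ux_v)$, and multiplication by $x_ux_v$ produces the short exact sequence of graded $R$-modules
$$0 \longrightarrow \bigl(R/(J:x_ux_v)\bigr)(-2) \xrightarrow{\ \cdot\, x_ux_v\ } R/J \longrightarrow R/I(G) \longrightarrow 0 ,$$
in which the left-hand map is injective with image $I(G)/J$ precisely by the definition of the colon ideal $(J:x_ux_v)=\{f\in R : fx_ux_v\in J\}$, and the cokernel is $R/I(G)$.

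The crucial step is to identify $R/(J:x_ux_v)$. Since $J$ is generated by the squarefree monomials $x_ax_b$ with $\{a,b\}\in E(G)\setminus\{e\}$, dividing each generator by its greatest common divisor with $x_ux_v$ shows that $(J:x_ux_v)$ is generated by the linear forms $x_w$ with $w\in(N_G(u)\cup N_G(v))\setminus\{u,v\}$, arising from the edges that meet $\{u,v\}$ but differ from $e$, together with the quadrics $x_ax_b$ for edges $\{a,b\}$ disjoint from $\{u,v\}$. Reducing modulo the linear forms annihilates every variable indexed by a neighbour of $u$ or $v$, leaves $x_u$ and $x_v$ as free variables, and keeps exactly the edges contained in $V(G)\setminus(N_G[u]\cup N_G[v])=V(G_e)$. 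Hence $R/(J:x_ux_v)$ is the tensor product of the polynomial ring $k[x_u,x_v]$ with $R'/I(G_e)$, where $R'=k[x_w : w\in V(G_e)]$; since adjoining free polynomial variables does not change regularity, $\reg\bigl(R/(J:x_ux_v)\bigr)=\reg\bigl(R'/I(G_e)\bigr)$.

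I would then invoke the elementary fact that a short exact sequence $0\to A\to B\to C\to 0$ of finitely generated graded modules satisfies $\reg C\leqslant\max\{\reg B,\ \reg A-1\}$, which follows at once from the long exact sequence of the functors $\mathrm{Tor}_i(-,k)$. Applying this to the sequence above, with $\reg A=\reg\bigl(R/(J:x_ux_v)\bigr)+2=\reg\bigl(R'/I(G_e)\bigr)+2$ and $\reg B=\reg\bigl(R/I(G\setminus e)\bigr)$, gives
$$\reg\bigl(R/I(G)\bigr)\leqslant\max\bigl\{\reg\bigl(R/I(G\setminus e)\bigr),\ \reg\bigl(R'/I(G_e)\bigr)+1\bigr\}.$$
Passing from quotient regularity to ideal regularity through $\reg(I(\,\cdot\,))=\reg(R/I(\,\cdot\,))+1$, that is, through the convention $\reg(\,\cdot\,)$ of the paper, converts this into $\reg(G)\leqslant\max\{\reg(G\setminus e),\ \reg(G_e)+1\}$.

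The main obstacle is the colon-ideal identification in the middle step: one must check precisely that quotienting out the linear generators collapses $R/(J:x_ux_v)$ onto the edge ring of $G_e$, up to the two harmless free variables $x_u,x_v$, and that the degenerate situations in which $G_e$ or $G\setminus e$ carries no edge are reconciled with the conventions fixing $\reg$ on totally disconnected and empty graphs. Once this dictionary between the colon ideal and $G_e$ is in place, the regularity inequality for short exact sequences delivers the bound immediately.
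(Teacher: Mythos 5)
Your proof is correct, but there is nothing in the paper to compare it against: the paper states this lemma purely as a quotation of \cite[Theorem 3.5]{Ha} and gives no proof of its own. What you have reconstructed is essentially the standard argument behind that citation (the same technique underlies \cite[Lemma 2.10]{DHS}, quoted here as Lemma \ref{FL2}): the short exact sequence $0 \to \bigl(R/(J:x_ux_v)\bigr)(-2) \to R/J \to R/I(G)\to 0$ with $J = I(G\setminus e)$, the identification $(J:x_ux_v) = \bigl(x_w : w \in (N_G(u)\cup N_G(v))\setminus\{u,v\}\bigr) + I(G_e)$, the fact that adjoining free variables leaves regularity unchanged while the twist $(-2)$ raises it by $2$, and the bound $\reg C \leqslant \max\{\reg B,\ \reg A - 1\}$ from the long exact sequence of Tor are all correct and combine exactly as you say.

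The one point you raise but leave unresolved --- reconciling the degenerate cases with the paper's conventions --- genuinely cannot be resolved as stated, though the fault lies with the paper's conventions rather than with your argument. If $G$ consists of a single edge $uv$ and nothing else, then $G_e$ is the empty graph and $G\setminus e$ is totally disconnected, so the paper's conventions give $\max\{\reg(G\setminus e),\ \reg(G_e)+1\} = \max\{1,1\} = 1$, while $\reg(G) = \reg(I(G)) = 2$: the stated inequality fails. Your exact sequence produces the correct bound $\reg(R/I(G)) \leqslant \max\{\reg(R/J),\ \reg(R'/I(G_e)) + 1\}$, namely $1 \leqslant 1$; the mismatch arises solely because the paper assigns the empty graph regularity $0$, i.e. $\reg(R'/I(G_e))$ rather than $\reg(R'/I(G_e)) + 1$. (The same quirk makes Lemma \ref{FL2} fail literally for a lone edge.) The cited source, and every application of the lemma in this paper, in effect treats every edgeless graph, empty or not, as having regularity $1$; with that convention your final translation step goes through verbatim and your proof is complete.
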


\section{Prove the main result}

In this section we classify graphs $G$ that satisfy $\reg(G) = \nu(G)+1$. The following lemma shows that it suffices to consider  connected graphs.

\begin{lem} \label{Comp} Let $G$ be a graph with connected components $G_1,\ldots, G_s$. Then,
\begin{enumerate}
\item $\reg(G) = \sum_{i=1}^s (\reg(G_i)-1) + 1$;
\item $\nu(G) = \sum_{i=1}^s \nu(G_i)$;
\item $\nu_0(G) = \sum_{i=1}^s \nu_0(G_i)$. 
\end{enumerate}
\end{lem}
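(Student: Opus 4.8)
The plan is to treat the three statements separately, handling the two combinatorial identities (2) and (3) first and reserving the homological identity (1) for last.

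For parts (2) and (3): since $G_1,\ldots,G_s$ are the connected components of $G$, they are pairwise vertex-disjoint and no edge of $G$ joins two distinct components, so every edge of $G$ lies in exactly one $G_i$. First I would observe that a set $M$ of edges of $G$ is a matching precisely when each $M\cap E(G_i)$ is a matching of $G_i$, because edges lying in distinct components are automatically disjoint. Hence a maximum matching of $G$ is the disjoint union of maximum matchings of the $G_i$, giving $\nu(G)=\sum_i \nu(G_i)$. For the induced version I would additionally use that the induced subgraph of $G$ on $\bigcup_i V(M_i)$ contains no edge between the $V(M_i)$'s, since there are no edges across components; thus a union of induced matchings $M_i$ of the $G_i$ is again an induced matching of $G$, and conversely any induced matching of $G$ restricts to an induced matching in each component. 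This yields $\nu_0(G)=\sum_i \nu_0(G_i)$.

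For part (1): write $R_i := k[x_v \mid v\in V(G_i)]$, so that $R=R_1\otimes_k\cdots\otimes_k R_s$ and, because $I(G)$ is generated by the (disjointly supported) edge sets of the components, there is a graded isomorphism $R/I(G)\cong (R_1/I(G_1))\otimes_k\cdots\otimes_k(R_s/I(G_s))$. The key step is the additivity of regularity under tensor products over disjoint variable sets: if $A$ and $B$ are nonzero graded algebras over disjoint polynomial rings, the minimal free resolution of $A\otimes_k B$ is the tensor product of their minimal free resolutions, so by the K\"{u}nneth formula the graded Betti numbers satisfy $\beta_{i,j}(A\otimes_k B)=\sum \beta_{i_1,j_1}(A)\beta_{i_2,j_2}(B)$ summed over $i_1+i_2=i$ and $j_1+j_2=j$. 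Since $j-i=(j_1-i_1)+(j_2-i_2)$ and a summand is nonzero exactly when both factors are, taking the maximum gives $\reg(A\otimes_k B)=\reg(A)+\reg(B)$; iterating on $s$ yields $\reg(R/I(G))=\sum_i \reg(R_i/I(G_i))$.

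To finish I would convert between $\reg(R/I)$ and $\reg(I)$. For an edge ideal $I(G_i)\ne 0$ the shift of the minimal resolution gives $\reg(G_i)=\reg(I(G_i))=\reg(R_i/I(G_i))+1$, while for a component that is a single isolated vertex one has $R_i/I(G_i)=R_i$, so $\reg(R_i/I(G_i))=0=\reg(G_i)-1$ by the convention $\reg(G_i)=1$; in either case $\reg(R_i/I(G_i))=\reg(G_i)-1$. Applying the same conversion to $G$ itself, which reads $\reg(G)=\reg(R/I(G))+1$ and is valid whether or not $G$ has an edge, gives $\reg(G)=\sum_i \reg(R_i/I(G_i))+1=\sum_i(\reg(G_i)-1)+1$, as claimed. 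I expect the main obstacle to be a clean justification of the tensor-product additivity, namely verifying the K\"{u}nneth formula for the minimal free resolutions and checking that the maximizing Betti numbers survive in the product, together with carefully tracking the degenerate cases (isolated vertices and the edgeless graph) so that the stated conventions for $\reg$ are respected throughout.
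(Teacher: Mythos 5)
Your proof is correct, and it is worth comparing it to the paper's, which is essentially a citation: the paper declares (2) and (3) obvious and handles (1) by invoking \cite[Corollary 3.10]{ABH}, whereas you prove (1) from first principles. Your argument --- identifying $R/I(G)$ with the tensor product of the $R_i/I(G_i)$ over disjoint variable sets, observing that the tensor product of the minimal free resolutions is again a minimal free resolution (exactness via K\"{u}nneth over the field $k$, minimality because the differential entries stay inside the graded maximal ideal), reading off the convolution formula for graded Betti numbers, and concluding $\reg(A\otimes_k B)=\reg(A)+\reg(B)$ because the product of two nonvanishing extreme Betti numbers appears as a nonnegative summand that cannot cancel --- is in substance the standard proof of the result the paper cites, so there is no gap in it. A genuine merit of your write-up over the paper's one-line proof is the explicit treatment of the degenerate cases: checking that an isolated-vertex component has $\reg(R_i/I(G_i))=0=\reg(G_i)-1$ under the paper's convention $\reg(G_i)=1$, and that the conversion $\reg(G)=\reg(R/I(G))+1$ remains valid for edgeless graphs, is exactly what makes the formula $\reg(G)=\sum_{i=1}^s(\reg(G_i)-1)+1$ hold verbatim with the paper's conventions. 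What the paper's approach buys is brevity; what yours buys is self-containedness and the verification that the conventions are consistent, which the citation glosses over.
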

\begin{proof}
$(1)$ follows from \cite[Corollary 3.10]{ABH}; $(2)$ and $(3)$ are obvious.
\end{proof}

\begin{lem}\label{C1} Let $G$ be a $C_5$-free graph with $\reg(G) = \nu(G)+1$. Then, $\nu(G)=\nu_0(G)$.
\end{lem}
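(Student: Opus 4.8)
The plan is to induct on the number of vertices of $G$, using Lemma~\ref{Comp} to reduce at once to the connected case. Indeed, if $G$ is disconnected with components $G_1,\dots,G_s$, then $\reg(G)=\nu(G)+1$ together with Lemma~\ref{Comp} and $(\ref{EQ2})$ forces $\reg(G_i)=\nu(G_i)+1$ for every $i$ (each summand $\reg(G_i)-1-\nu(G_i)\leqslant 0$ must vanish); each $G_i$ is $C_5$-free with fewer vertices, so the induction hypothesis gives $\nu_0(G_i)=\nu(G_i)$, and summing yields $\nu_0(G)=\nu(G)$. Hence I may take $G$ connected, and I argue through a \emph{minimal counterexample}: $G$ is $C_5$-free and connected with $\reg(G)=\nu(G)+1$ but $\nu_0(G)\leqslant\nu(G)-1$, of least order. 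A cheap first observation is that combining the hypothesis with Lemma~\ref{UB} gives $\nu(G)+1=\reg(G)\leqslant\mm(G)+1\leqslant\nu(G)+1$, so $\mm(G)=\nu(G)$; that is, $G$ is equimatchable, a fact I expect to use to control maximal matchings. The cases $\nu(G)\leqslant 1$ are immediate (then $G$ is a star or a triangle and $\nu_0=\nu$), so I assume $\nu(G)\geqslant 2$, which in particular makes $\reg(G_x)\geqslant 2$ below guarantee that $G_x$ has an edge.

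The engine is Lemma~\ref{FL2}: for every vertex $x$, either $\reg(G)=\reg(G\setminus x)$ or $\reg(G)=\reg(G_x)+1$. Suppose first that some $x$ satisfies $\reg(G)=\reg(G\setminus x)$. Then $\reg(G\setminus x)=\nu(G)+1$, and since $\reg(G\setminus x)\leqslant\nu(G\setminus x)+1\leqslant\nu(G)+1$ by $(\ref{EQ2})$, I obtain $\nu(G\setminus x)=\nu(G)$ and $\reg(G\setminus x)=\nu(G\setminus x)+1$. As $G\setminus x$ is a $C_5$-free induced subgraph with fewer vertices, the induction hypothesis gives $\nu_0(G\setminus x)=\nu(G\setminus x)=\nu(G)$; but an induced matching of $G\setminus x$ is one of $G$, so $\nu_0(G)=\nu(G)$, contradicting minimality. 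Thus in the minimal counterexample every vertex falls in the second alternative: $\reg(G_x)=\nu(G)$ for all $x$.

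Fix any vertex $x$. From $\reg(G_x)=\nu(G)$ and $(\ref{EQ2})$ I get $\nu(G_x)\geqslant\nu(G)-1$, while extending a maximum matching of $G_x$ by an edge $xy$ with $y\in N_G(x)$ (note $x,y\notin V(G_x)$) gives $\nu(G)\geqslant\nu(G_x)+1$. Hence $\nu(G_x)=\nu(G)-1$ and $\reg(G_x)=\nu(G_x)+1$, so the induction hypothesis furnishes an induced matching $N_x$ of $G_x$ with $|N_x|=\nu(G)-1$. The whole problem now collapses to a single extension step: since $V(N_x)\subseteq V(G_x)$ is disjoint from, and nonadjacent to, $x$, it suffices to find one neighbor $y$ of $x$ with $y$ nonadjacent to $V(N_x)$, for then $N_x\cup\{xy\}$ is an induced matching of size $\nu(G)$, contradicting $\nu_0(G)\leqslant\nu(G)-1$.

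The main obstacle is exactly this extension, and it is where $C_5$-freeness must enter. If no admissible $y$ exists, then for a suitable $x$ every neighbor $y$ of $x$ is adjacent to an endpoint $a$ of some edge $ab\in N_x$, producing a path $x-y-a-b$ with $x\not\sim a,b$; a natural choice of $x$ is a vertex outside $V(N)\cup N_G(V(N))$ for a maximum induced matching $N$ of $G$ (such vertices form an independent set by maximality, and then $N\subseteq G_x$). The plan is to convert this forbidden local picture into a contradiction: using equimatchability to promote $N_x\cup\{xy\}$ to a maximum matching and then tracing an alternating path against the induced matching $N_x$, I would try to close up an induced $5$-cycle, contradicting $C_5$-freeness; alternatively, to re-route $N_x$ (swapping $ab$ for $xy$ and repairing the neighbors of $y$) into a strictly larger induced matching. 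Making one of these mechanisms go through uniformly---in particular producing the fifth vertex of the pentagon from the data $x-y-a-b$, and handling the case where $N$ dominates $G$ so that no vertex $x$ lies outside $V(N)\cup N_G(V(N))$---is the step I expect to demand the most care.
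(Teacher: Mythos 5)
Your setup reproduces the paper's reduction faithfully: induction on $|V(G)|$, reduction to the connected case via Lemma~\ref{Comp}, the dichotomy of Lemma~\ref{FL2} (either some vertex $x$ has $\reg(G)=\reg(G\setminus x)$, in which case induction finishes immediately, or $\reg(G_x)=\nu(G)$ for \emph{every} vertex $x$), and then the identities $\nu(G_x)=\nu(G)-1$, $\reg(G_x)=\nu(G_x)+1$, which by induction give an induced matching of $G_x$ of size $\nu(G)-1$ that one hopes to extend by a single edge through $x$. All of this is correct and is exactly how the paper begins. But you then stop at the decisive step --- ``making one of these mechanisms go through uniformly \dots is the step I expect to demand the most care'' --- and what you sketch in its place (an unspecified alternating-path argument, or a re-routing of $N_x$, starting from a vertex $x$ outside $V(N)\cup N_G(V(N))$, which you concede may not exist) is a plan, not a proof. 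So there is a genuine gap, and it sits precisely where the hypothesis $C_5$-free must do its work.

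The paper closes this gap with two ideas your proposal lacks. First, the pair of vertices is not arbitrary: one takes $v$ of \emph{minimum degree} in $G$ and lets $x$ be a neighbor of $v$, then tries to extend the induced matching $\{e_1,\dots,e_m\}$ of $G_x$ by the specific edge $xv$. With this choice, $x$ is automatically nonadjacent to every $e_i$ (they live in $G_x=G\setminus N_G[x]$), so only $v$'s adjacencies must be controlled, and the complement $S$ of $\{x,v\}\cup V(e_1)\cup\cdots\cup V(e_m)$ is an independent set because $\{xv,e_1,\dots,e_m\}$ is a maximum (hence maximal) matching. Second, the obstruction splits into two cases, each killed by a different tool. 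If $v$ is adjacent to \emph{both} endpoints $y,z$ of some $e_i$, then $G_v$ embeds in $H:=G\setminus\{x,v,y,z\}$, which has the maximal matching $\{e_2,\dots,e_m\}$, so Lemmas~\ref{FL1} and~\ref{UB} give $\reg(G)=\reg(G_v)+1\leqslant m+1<\nu(G)+1$, a contradiction (note this uses that the second alternative of Lemma~\ref{FL2} holds at $v$ as well, not just at $x$). If $v$ is adjacent to exactly one endpoint $y$ of $e_1=yz$, then minimality of $\deg_G(v)$ forces $\deg_G(z)\geqslant\deg_G(v)\geqslant 2$, and since the induced-matching property confines any extra neighbor of $z$ (and of $x$) to $S$, one gets $t\in S$ with $zt\in E(G)$ and $s\in S$ with $xs\in E(G)$; now $C_5$-freeness (as a subgraph condition) forbids $tx$ and $sz$ from being edges, since either would close the pentagon $xvyztx$ or $xvyzsx$, whence $s\neq t$ and $\{sx,vy,zt,e_2,\dots,e_m\}$ is a matching of size $m+2>\nu(G)$, a contradiction. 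Without the minimum-degree choice you have no way to force the vertices $s,t$ into existence, and without the two-case split you cannot even begin the pentagon argument you allude to; these are the missing ideas, not routine details.
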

\begin{proof} We prove by induction on $|V(G)|$. If $|V(G)=1|$, then $G$ is just one point, and then $\nu(G) = \nu_0(G)=0$.

Assume that $|V(G)| > 1$. If $\nu(G)=1$, then $\nu_0(G) = 1$, and the lemma follows. 

Assume that $\nu(G) \geqslant 2$. By Lemma \ref{Comp} we may assume that $G$ is connected. If $G$ has a vertex $v$ such that $\reg(G) = \reg(G\setminus v)$. By Lemma \ref{UB} we have $$\nu(G)+1=\reg(G) =\reg(G\setminus v) \leqslant \nu(G\setminus v)+1,$$
hence $\nu(G)\leqslant \nu(G\setminus v)$. The converse inequality $\nu(G)\geqslant \nu(G\setminus v)$ holds since $G\setminus v$ is an induced subgraph of $G$. Thus, $\nu(G\setminus v) = \nu(G)$. It follows that $\reg(G\setminus v) = \nu(G\setminus v)+1$. 

By the induction hypothesis, we have $\nu(G\setminus v) = \nu_0(G\setminus v)$.  Since $G\setminus v$ is an induced subgraph of $G$,  $\nu_0(G\setminus v) \leqslant \nu_0(G)$. Hence, $\nu_0(G) = \nu(G)$, and the lemma holds.

Therefore, by Lemmas \ref{FL1} and \ref{FL2} we may assume that $\reg(G_v) = \reg(G)-1=\nu(G)$ for every $v$.

Let $v$ be a vertex of minimal degree of $G$ and let $x$ be a neighbor of $v$ in $G$.  Since $x$ is not an isolated vertex of $G$, we have $\nu(G_x)\leqslant \nu(G)-1$. Together with equality $\reg(G_x) = \nu(G)$,  it yields $\reg(G_x) \geqslant \nu(G_x)+1$. Together with Lemma \ref{FL2}, we obtain $\reg(G_x) = \nu(G_x)-1$ and $\nu(G)=\nu(G_x)+1$.

By the induction hypothesis we have $\nu(G_x) = \nu_0(G_x)$. Let $m = \nu_0(G_x)$ and $\{e_1,\ldots,e_m\}$ be an induced matching of $G_x$.  Then, $\{xv, e_1,\ldots, e_m\}$ is a maximal matching of $G$. Note that $x$ is not incident to $e_i$ for every $i$. 

Let $S$ be the set of vertices of $G$ which are different from $x$, $v$, and all vertices of  $e_i$ for $i=1,\ldots,m$. Then, $S$ is an independent set of $G$ because $\{xv,e_1,\ldots, e_m\}$ is a maximal matching of $G$.

Assume that $v$ is incident to $e_i$ for some $i$. Without loss of generality we may assume that $i=1$. Let $e_1=yz$ and assume that $v$ is adjacent with $y$.

If $v$ is adjacent with $z$. Let $H := G\setminus\{x,v,y,z\}$. Observe that $\{e_2,\ldots,e_m\}$ is a maximal matching of $H$, so $\reg(H) \leqslant m$ by Lemma \ref{UB}. On the other hand, since $G_v$ is an induced subgraph of $H$, by Lemma \ref{FL1} we have
$\reg(G_v) \leqslant \reg(H) \leqslant m$. Therefore, $\reg(G)=\reg(G_v)+1\leqslant m+1 < \nu(G)+1$, a contradiction. 

\medskip

\begin{center}

\includegraphics[scale=0.7]{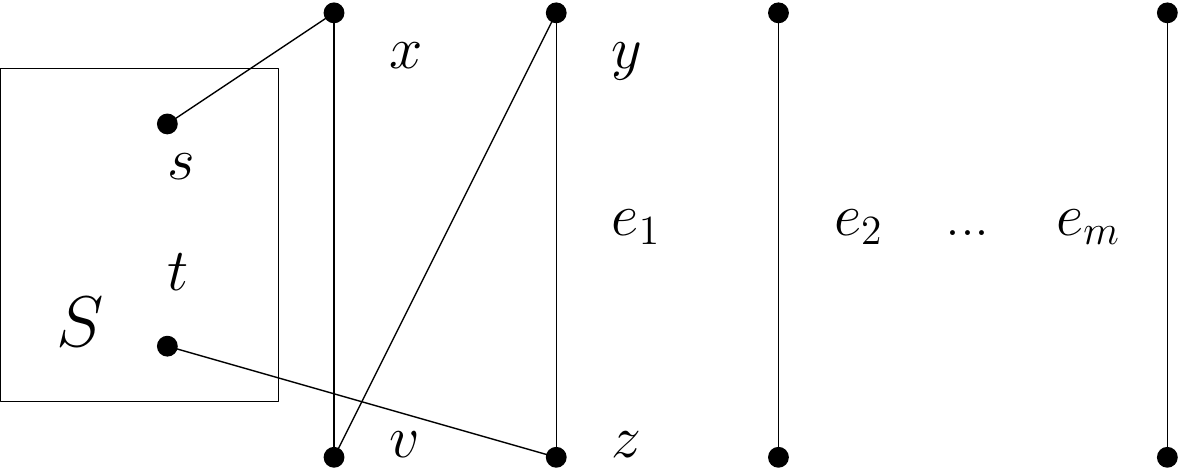}\\
\medskip

{\it Figure $2$.}
\end{center}

Thus, $v$ is not adjacent with $z$ (see Figure $2$). Note that $x$ and $y$ are two neighbors of $v$ so that $\deg_G(v) \geqslant 2$. Since $\deg_G(z)\geqslant \deg_G(v)\geqslant 2$, it follows that $z$ must be incident with some vertex in $S$, say $t$. Then, $t$ is not adjacent with $x$. Because if $t$ is adjacent with $x$, then $G$ would have a cycle $xvyztx$ of length $5$, a contradiction. Similarly, $x$ has a neighbor in $S$, say $s$, which is not adjacent with $z$. In particular, $s\ne t$. But then we would have $\{sx, vy, zt, e_2,\ldots,e_m\}$ is a matching of $G$, so  $\nu(G) \geqslant m+2$, a contradiction.

Therefore, $v$ is not incident to any $e_i$. Then, $\{xv,e_1,\ldots, e_m\}$ is an induced matching of $G$. Since $\nu(G)=m+1$, it follows that $\nu_0(G) = m+1 = \nu(G)$, and the proof of the lemma is complete.
\end{proof}

\begin{lem} \label{C1a} Let $G$ be a graph with $\reg(G) = \nu(G)+1$. If $e$ is an edge of $G$ lying in the middle of a simple path of length $3$ in $G$, then 
$$\reg(G)=\nu(G) = \nu(G\setminus e)=\reg(G\setminus e).$$
\end{lem}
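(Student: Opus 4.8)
Denote the path of length $3$ by $a-b-c-d$, so that $e=\{b,c\}$ and $ab,bc,cd\in E(G)$ with $a,b,c,d$ distinct; the displayed conclusion is (up to the evident omission of the summands $+1$) the assertion that $\nu(G\setminus e)=\nu(G)$ and $\reg(G\setminus e)=\reg(G)$, so that the equality $\reg=\nu+1$ passes to $G\setminus e$. The plan is to apply the recursive bound of Lemma~\ref{FL3} to the edge $e$, namely $\reg(G)\le\max\{\reg(G\setminus e),\reg(G_e)+1\}$, and to show that the term $\reg(G_e)+1$ is \emph{strictly} smaller than $\reg(G)$; this forces the maximum to be realised by $\reg(G\setminus e)$, after which everything collapses by the upper bound of Lemma~\ref{UB}. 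The crucial input is a matching estimate for $G_e=G\setminus(N_G[b]\cup N_G[c])$, and here is where the hypothesis that $e$ is the \emph{middle} edge of a $P_4$ does its work.

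First I would examine $G_e$. Since $a\in N_G(b)$ and $d\in N_G(c)$, the four vertices $a,b,c,d$ are all deleted in passing to $G_e$, so no matching of $G_e$ meets $\{a,b,c,d\}$. Consequently, if $M_e$ is a maximum matching of $G_e$, then $M_e\cup\{ab,cd\}$ is a matching of $G$: the edges $ab$ and $cd$ are disjoint (the four vertices being distinct) and their endpoints lie in $N_G[b]\cup N_G[c]$, hence outside the vertex set of $M_e$. This yields $\nu(G)\ge\nu(G_e)+2$, i.e. $\nu(G_e)\le\nu(G)-2$. Combining with Lemma~\ref{UB} and $\mm(G_e)\le\nu(G_e)$ gives $\reg(G_e)\le\nu(G_e)+1\le\nu(G)-1$, whence
$$\reg(G_e)+1\le\nu(G)<\nu(G)+1=\reg(G).$$

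The remainder is bookkeeping. By Lemma~\ref{FL3} and the strict inequality just obtained, the maximum defining the bound on $\reg(G)$ cannot be $\reg(G_e)+1$, so $\reg(G)\le\reg(G\setminus e)$. On the other hand $G\setminus e$ is a subgraph of $G$, so $\nu(G\setminus e)\le\nu(G)$ and Lemma~\ref{UB} gives $\reg(G\setminus e)\le\nu(G\setminus e)+1\le\nu(G)+1=\reg(G)$; hence $\reg(G\setminus e)=\reg(G)=\nu(G)+1$. Feeding $\reg(G\setminus e)=\nu(G)+1$ back into $\reg(G\setminus e)\le\nu(G\setminus e)+1$ forces $\nu(G\setminus e)\ge\nu(G)$, and together with $\nu(G\setminus e)\le\nu(G)$ this gives $\nu(G\setminus e)=\nu(G)$, completing the argument.

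I expect the only genuinely subtle step to be the matching inequality $\nu(G_e)\le\nu(G)-2$, and it is precisely there that both hypotheses are essential. Being in the middle of a $P_4$ supplies the two disjoint flanking edges $ab,cd$ lying entirely inside $N_G[b]\cup N_G[c]$; for an arbitrary edge one would only recover $bc$ itself and hence the weaker bound $\nu(G_e)\le\nu(G)-1$, which makes $\reg(G_e)+1\le\reg(G)$ non-strict and destroys the dichotomy. The hypothesis $\reg(G)=\nu(G)+1$ is then used to turn the estimate into the strict inequality above; that it cannot be dropped is shown by the path on six vertices, where the middle edge of the central $P_4$ does lower the matching number upon deletion, but where the hypothesis $\reg(G)=\nu(G)+1$ fails.
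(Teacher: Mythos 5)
Your proof is correct and follows essentially the same route as the paper's: apply Lemma~\ref{FL3} to the middle edge, show $\reg(G_e)+1\le\nu(G)<\reg(G)$ via the matching obtained by adjoining the two flanking edges, and then close the loop with Lemma~\ref{UB}. The only cosmetic difference is that the paper bounds $\reg(G_e)$ through the intermediate graph $H=G\setminus\{x,u,v,y\}$ (using Lemma~\ref{FL1} and $\nu(H)\le\nu(G)-2$), whereas you bound $\nu(G_e)\le\nu(G)-2$ directly; the underlying estimate is identical.
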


\begin{proof}
Assume that $e=uv$ and $G$ has a simple path $xuvy$. Let $H := G\setminus\{x, u, v, y\}$. We have $G_{uv}$ is an induced subgraph of $H$, so $\reg(G_{uv}) \leqslant \reg(H)$. If $M$ is a matching of $H$, then $M\cup \{xu, vy\}$ is a matching of $G$. It follows that $\nu(H) \leqslant \nu(G)-2$. Therefore, $\reg(G_{uv}) \leqslant \reg(H)\leqslant \nu(H)+1\leqslant \nu(G)-1$. Together with the fact $\reg(G) = \nu(G)+1$, Lemma \ref{FL3} yields $\reg(G) \leqslant \reg(G\setminus uv)$. On the other hand, $\nu(G\setminus uv)\leqslant \nu(G)$ because $G\setminus uv$ is a subgraph of $G$. By Lemma \ref{UB} we obtain
$$\reg(G\setminus uv) \leqslant \nu(G\setminus uv) +1\leqslant \nu(G)+1=\reg(G).$$
It follows that  $\reg(G\setminus uv) = \nu(G\setminus uv)+1=\nu(G)+1=\reg(G)$, as required. 
\end{proof}

\begin{lem}\label{C2} Let $G$ be a connected graph which contains a cycle $C_5$ of length $5$. If $\reg(G) = \nu(G)+1$,  then $G$ is just $C_5$.
\end{lem}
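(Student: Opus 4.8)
The goal is to show that a connected graph $G$ containing a $C_5$ and satisfying $\reg(G)=\nu(G)+1$ must equal $C_5$. The plan is to fix an induced five-cycle $C_5$ on vertices, say $v_1v_2v_3v_4v_5$, and to argue that $G$ cannot have any vertex or edge outside it. The first observation I would record is that $\reg(C_5)=2$ and $\nu(C_5)=2$, so the hypothesis $\reg(G)=\nu(G)+1$ is consistent with $C_5$ itself. The strategy is then to suppose for contradiction that $G$ is strictly larger than this $C_5$ and derive a violation of the hypothesis, most likely by producing a matching of $G$ that is too large relative to the regularity, or by exhibiting the regularity-controlling structure of Lemma \ref{C1a}.

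The key leverage should come from Lemma \ref{C1a}: every edge lying in the middle of a simple path of length $3$ can be deleted without changing either $\nu$ or $\reg$, and the hypothesis persists for $G\setminus e$. Inside the $C_5$, each of the five edges sits in the middle of a length-$3$ path along the cycle, so naively one could try to delete them; the point is rather to use this on edges created by attaching new vertices. First I would show that since $G$ is connected and strictly contains the $C_5$, there is a vertex $w\notin\{v_1,\dots,v_5\}$ adjacent to some $v_i$. This edge $wv_i$, together with the two cycle-edges at $v_i$, creates simple paths of length $3$ through the cycle edges, which via Lemma \ref{C1a} forces equalities $\reg=\nu$, contradicting $\reg=\nu+1$; this is the mechanism I expect to close the argument.

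Concretely, I would proceed as follows. Suppose $w\sim v_1$ with $w$ outside the pentagon. Then $wv_1v_2v_3$ is a simple path of length $3$ with middle edge $v_1v_2$, so Lemma \ref{C1a} gives $\reg(G)=\nu(G)$, which directly contradicts $\reg(G)=\nu(G)+1$. The same works if $w$ is attached at any $v_i$. Thus no external vertex can attach to the cycle, and by connectivity $V(G)=\{v_1,\dots,v_5\}$. It then remains to rule out extra edges (chords) among the five vertices: a chord, say $v_1v_3$, together with the path structure again puts some cycle edge in the middle of a length-$3$ path, yielding the same contradiction via Lemma \ref{C1a}, so $G$ has no chords and $G=C_5$.

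**Anticipated obstacle.** The delicate point is the bookkeeping needed to guarantee that the relevant edge genuinely lies \emph{in the middle} of a \emph{simple} path of length $3$ (all four vertices distinct), and that the hypothesis of Lemma \ref{C1a} is legitimately met rather than spoiled by the new adjacency creating shorter cycles. The hardest case will be handling chords and multiple attachments uniformly, since when extra edges are present one must verify carefully that a suitable length-$3$ simple path exists whose middle edge can be removed, and that iterating the deletions does not accidentally disconnect the pentagon before the contradiction is reached. I expect the clean route is to always exhibit the contradiction from a single well-chosen path of length $3$, so that no iteration is needed.
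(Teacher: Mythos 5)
Your proof hinges on a reading of Lemma \ref{C1a} that cannot be correct, and the whole mechanism of your argument rests on it. As the proof of that lemma makes clear (its final line reads $\reg(G\setminus uv) = \nu(G\setminus uv)+1=\nu(G)+1=\reg(G)$), its actual content is that deleting an edge $e$ lying in the middle of a simple path of length $3$ \emph{preserves} everything: $\reg(G\setminus e)=\reg(G)$, $\nu(G\setminus e)=\nu(G)$, and the equality $\reg=\nu+1$ still holds for $G\setminus e$. The displayed formula in the lemma's statement is garbled by a typo, but it cannot mean what you take it to mean, namely that $G$ itself satisfies $\reg(G)=\nu(G)$: that reading would make the lemma self-refuting, asserting that no graph with $\reg(G)=\nu(G)+1$ contains any edge in the middle of a simple path of length $3$. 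This is already false for the path on five vertices (a Cameron--Walker graph with $\reg=\nu+1=3$ whose middle edge lies in such a path), and, fatally for your argument, it is false for $C_5$ itself: the edge $v_1v_2$ of the pentagon lies in the middle of the simple path $v_5v_1v_2v_3$, so your use of the lemma would "prove" that $C_5$ violates $\reg=\nu+1$, contradicting $\reg(I(C_5))=3=\nu(C_5)+1$. You even observed that every edge of the pentagon sits in the middle of a length-$3$ path, but did not register that this observation destroys the contradiction you rely on. (Relatedly, your opening claim that $\reg(C_5)=2$ is inconsistent with your claim that $C_5$ satisfies the hypothesis; under the paper's convention $\reg(C_5)=3$.)

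With the correct reading of Lemma \ref{C1a}, attaching a vertex $w$ to $v_1$ yields no contradiction at all: you only learn that $G\setminus v_1v_2$ again satisfies the hypothesis (and note that deleting $v_1v_2$ may destroy the pentagon, so you cannot even keep "contains $C_5$" as an invariant). The same defect kills your treatment of chords. The paper's proof shows how much more is actually needed: an induction on $|V(G)|+|E(G)|$ in which Lemma \ref{C1a} is applied, in its hypothesis-preserving form, only to edges \emph{outside} the pentagon in order to show $G$ is unicyclic; the spanning-chord case is handled separately via Fr\"{o}berg's theorem (Lemma \ref{FL4}); then an edge of the pentagon is deleted to produce a tree $H$ with $\reg(H)=\nu(H)+1$, which is Cameron--Walker by Lemma \ref{C1}; and finally the bipartite-plus-leaves structure of $H$ from Theorem \ref{CaWa} is exploited in a case analysis (positions of $u,v$ in the bipartition, the degree of the fifth cycle vertex $w$, the degrees of $s$ and $t$), where explicit matchings of size $\nu(G)+1$ or regularity computations via Lemmas \ref{Comp} and \ref{FL2} produce the contradiction. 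None of this case analysis is present in, or recoverable from, your outline, so the proposal has a genuine and unrepaired gap.
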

\begin{proof} For simplicity, let $\gamma(G) := |V(G)| + |E(G)|$. Since $C_5$ is a subgraph of $G$, $\gamma(G) \geqslant 10$.

We will prove the lemma by induction on $\gamma(G)$. If $\gamma(G) = 10$, then $G$ is just the cycle $C_5$, and the lemma holds.

Assume that $\gamma(G) \geqslant 10$.  If $V(G) = V(C_5)$, then $G$ is a pentagon with some chords. It follows that $G^c$ is a chordal graph, so $\reg(G) = 2$ by Lemma \ref{FL4}. On the other hand, $\nu(G) = 2$. It implies $\reg(G) < \nu(G)+1$, a contradiction. 

Therefore, $|V(G)| \ne V(C_5)$. We first prove that $G$ has only one cycle. Indeed, if $G$ has another cycle $C \ne C_5$. Since $G$ is connected, it has an edge of $C$, say $e$, such that
\begin{enumerate}
\item $e$ is not in $C_5$;
\item $e$ is in the middle of a simple path of length $3$ in $G$.
\end{enumerate}
By Lemma \ref{C1a} we have $\reg(G\setminus e) = \nu(G\setminus e)$. Note that $G\setminus e$ is connected and has the cycle $C_5$ of length $5$. Since $\gamma(G\setminus e) =\gamma(G)-1$, by the induction hypothesis we have $G\setminus e$ must be $C_5$, a contradiction. Thus, $G$ has only cycle which is just $C_5$.

Now let $uv$ be an edge of $C_5$ and $H := G\setminus uv$. Then, $H$ is a connected graph without cycles, so it is a tree. By Claim $1$ we have
$$\reg(G) = \nu(G) +1 = \nu(H) +1 = \reg(H),$$
so $H$ is a Cameron-Walker graph by Lemma \ref{C1}. 

Since $G=H+uv$ and $G$ has a cycle of length $5$ , $H$ is not a star. Together with Theorem \ref{CaWa}, we conclude that $H$ is a bipartite graph with bipartition $(X,Y)$ such that that every vertex $x$ in $X$ is adjacent to some leaves in $Y$. Let $m= \nu(G)$. Then, we have $m = \nu(H) = |X|$.

Again, because $G = H + uv$ and $G$ has an odd cycle,  we have $u$ and $v$ both are in $X$ or both are in $Y$. If $u,v\in X$, then $G_u$ is an induce subgraph of $G\setminus \{u,v\}$. In this case, $\nu(G_u)\leqslant \nu(G\setminus\{u,v\}) = |X|-2=\nu(G)-2$, and therefore $\reg(G_u) \leqslant \nu(G_u) +1\leqslant \nu(G)-1$. Similarly, $\nu(G\setminus u) = |X|-1=\nu(G)-1$ and $\reg(G\setminus u) \leqslant \nu(G\setminus u)+1 =\nu(G)$. By Lemma \ref{FL2}, we get $\reg(G) \leqslant \nu(G)$, a contradiction.

Therefore, $u,v\in Y$. We may assume that the cycle $C_5$ is $suvtws$. Then, $s,t\in X$ and $w\in Y$. Let $X =\{s,t,x_3,\ldots,x_m\}$. Let $y_3,\ldots,y_m\in Y$ are leaves of $G$ such that $x_iy_i\in E(G)$ for $i=3,\ldots,m$.

We consider two possible cases:

{\it Case $1$}: $\deg_G(w) = 2$. Since $G$ is connected and $V(G)\ne V(C_5)$, there is a vertex $z\in Y\setminus \{u,v,w\}$ such that $z$ is adjacent with $s$ or $t$. We may assume that $z$ is adjacent with $t$ (see Figure $3$). Observe that $z\notin \{y_3,\ldots,y_m\}$, so that 
$$\{zt, ws, uv, x_3y_3,\ldots,x_my_m\}$$
is a matching of $G$. Consequently, $\nu(G) \geqslant m+1$, a contradiction.

\medskip

\begin{center}

\includegraphics[scale=0.7]{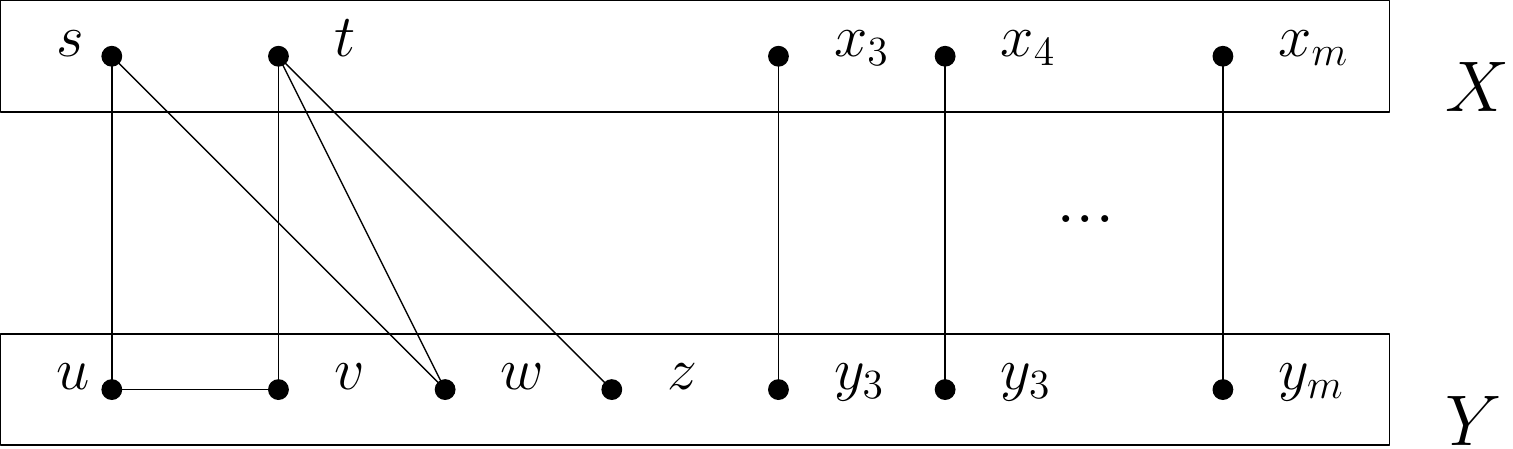}\\
\medskip

{\it Figure $3$.}
\end{center}

{\it Case $2$}: $\deg_G(w) > 2$. We first prove that $u$ and $v$ are two leaves of $H$. Indeed, if it is not the case, we may assume that $v$ is not a leaf. Let $z\in Y$ be a leaf of $H$ that is a neighbor of $t$. It is obvious that $z\notin \{u,v,w\}$. By the same argument as in the case $1$ we obtain a contradiction that $\nu(G) \geqslant m+1$. Thus, $u$ and $v$ are two leaves of $G$. 

We next prove that $\deg_G(s) = \deg_G(t)=2$. Indeed, if its is not the case, we may assume that $\deg_G(t) > 2$. Since $\deg_H(t) = \deg_G(t) > 2$ and $u$ is a leaf of $H$, $t$ has a neighbor in $z\in Y\setminus \{u,v,w\}$. Again by the same argument as in the case $1$ we obtain a contradiction that $\nu(G) \geqslant m+1$. Thus, $\deg_G(s) = \deg_G(t)=2$.

\medskip

\begin{center}

\includegraphics[scale=0.7]{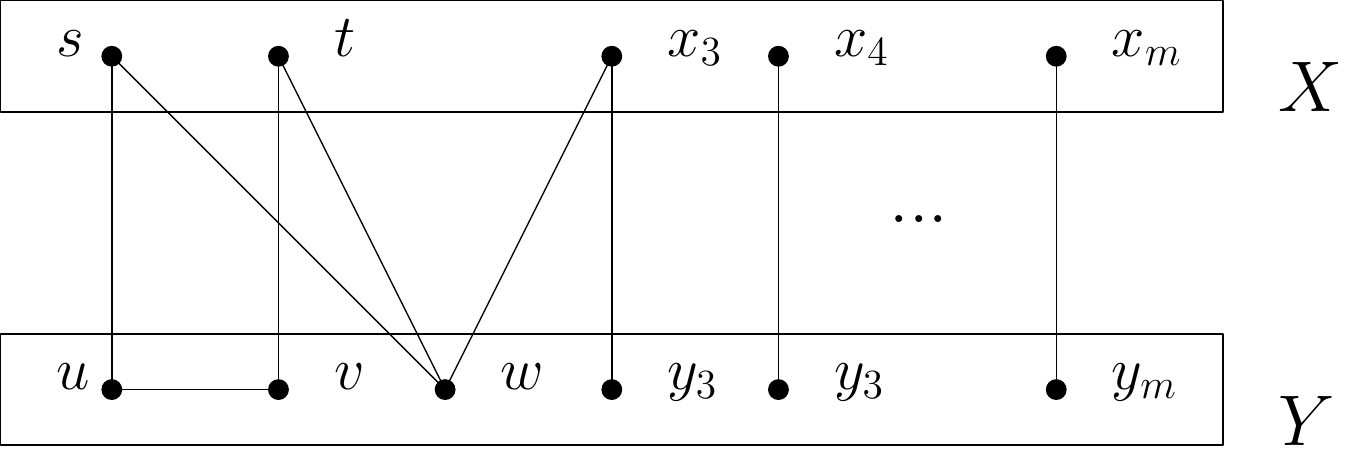}\\
\medskip

{\it Figure $4$.}
\end{center}

Since $\deg_G(w) > 2$, $w$ is adjacent with some vertices in $\{x_3,\ldots,x_m\}$ (see Figure $4$). We may assume that $w$ is adjacent with $x_3,\ldots,x_p$ and not adjacent with $x_{p+1},\ldots,x_m$ for $3\leqslant p \leqslant m$. Let $G'$ be the graph obtained from $G$ by deleting $p-3$ edges $wx_{4},\ldots,wx_p$. By applying successively Lemma \ref{C1a} we have 
$$\reg(G') = \nu(G')+1 = \nu(G)+1=\reg(G).$$
Similarly, if we let $G''$ be the graph obtained from $G'$ by deleting all edges of the form $x_3y$ where $y\in Y\setminus\{w\}$ and $y$ is not a leaf of $G$, then $$\reg(G") = \nu(G")+1 = \nu(G')+1=\reg(G')$$
and then
$$\reg(G") = \nu(G")+1 = \nu(G)+1=\reg(G).$$

Let $S$ be all leaves of $G"$ that are adjacent with $x_3$. Let $G_1 := G[\{s,u,v,t,w,x_3\}\cup S]$ and $G_2 := G\setminus(\{s,u,v,t,w,x_3\} \cup S)$. Then, $G" = G_1 \sqcup G_2$, therefore
$$\reg(G") = \reg(G_1)+\reg(G_2)-1.$$

Since $G_2$ is a Cameron-Walker graph by Theorem $\ref{CaWa}$, $\reg(G_2) = \nu(G_2) + 1 = m-2$. Now we compute $\reg(G_1)$. Observe that $G_1\setminus w$ consists of two connected components that are a path of length $4$ and a star with center $x_3$, so $\reg(G_1\setminus w) = 3$; and $(G_1)_w$ consists of an edge and the set $S$ of isolated vertices, so $\reg((G_1)_w)) = 2$. By Lemma \ref{FL2} we get $\reg(G_2) = 3$. Therefore, 
$$\reg(G")=\reg(G_1)+\reg(G_2)-1 = 3 + (m-2)-1 = m = \nu(G).$$
This contradicts the fact that $\reg(G") = \nu(G)+1$. 

In summary, we must have $V(G) = V(C_5)$, thus $G=C_5$ as we have seen in the beginning of the proof, and thus the lemma follows.
\end{proof}

We are now in position to prove the main result of the paper.

\begin{thm} \label{main-result} Let $G$ be a graph. Then, $\reg(I(G)) = \nu(G)+1$ if and only if each connected component of $G$ is either a pentagon or a Cameron-Walker graph.
\end{thm}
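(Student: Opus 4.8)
The plan is to reduce to a single connected component and then prove the two implications for connected graphs, where the real work has already been carried out in Lemmas~\ref{C1} and \ref{C2}. Write $G_1,\dots,G_s$ for the connected components of $G$. By Lemma~\ref{Comp} one has $\reg(G)=\sum_{i=1}^s(\reg(G_i)-1)+1$ and $\nu(G)=\sum_{i=1}^s\nu(G_i)$. Since $\mm(G_i)\leqslant\nu(G_i)$, Lemma~\ref{UB} gives $\reg(G_i)-1\leqslant\nu(G_i)$ for each $i$. Summing these inequalities and comparing with the two identities above shows that $\reg(G)=\nu(G)+1$ holds if and only if $\reg(G_i)=\nu(G_i)+1$ for \emph{every} $i$. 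Thus it suffices to characterize the connected graphs $G$ with $\reg(G)=\nu(G)+1$, and it is enough to show that these are exactly the pentagons and the connected Cameron--Walker graphs.

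For the \emph{only if} part I would assume $G$ connected with $\reg(G)=\nu(G)+1$ and split on whether $G$ contains a cycle of length $5$. If it does, Lemma~\ref{C2} applies directly and forces $G=C_5$, a pentagon. If $G$ is $C_5$-free, Lemma~\ref{C1} yields $\nu(G)=\nu_0(G)$; since a connected graph with $\nu=\nu_0$ is by definition Cameron--Walker, Theorem~\ref{CaWa} exhibits $G$ as one of the three listed families. These two cases are exhaustive, so every connected $G$ with $\reg(G)=\nu(G)+1$ is a pentagon or a Cameron--Walker graph.

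For the \emph{if} part I would verify $\reg(G_i)=\nu(G_i)+1$ for each of the two allowed types and recombine via the reduction above. If $G_i$ is Cameron--Walker then $\nu_0(G_i)=\nu(G_i)$, so Katzman's inequality $\reg(G_i)\geqslant\nu_0(G_i)+1$ together with $\reg(G_i)\leqslant\mm(G_i)+1\leqslant\nu(G_i)+1$ (Lemma~\ref{UB}) pins down $\reg(G_i)=\nu(G_i)+1$. If $G_i$ is a pentagon, then $\nu(C_5)=2$ and I would record $\reg(C_5)=3$; within the paper this is fixed by combining $\reg(C_5)\leqslant\mm(C_5)+1=3$ with the observation that $C_5$ is self-complementary and $C_5^c=C_5$ is not chordal, which excludes the value $2$ via the chordality criterion of Lemma~\ref{FL4} (in the form used in the proof of Lemma~\ref{C2}). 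In both cases $\reg(G_i)=\nu(G_i)+1$, and the component reduction then delivers $\reg(G)=\nu(G)+1$.

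I expect no serious obstacle at the level of the theorem itself: once Lemmas~\ref{C1} and \ref{C2} are in hand, the statement is essentially a packaging of them together with the additivity of Lemma~\ref{Comp}. The two points that still demand care are the component reduction---where one must notice that the per-component upper bound forces term-by-term equality in the sum, not merely in aggregate---and the single numerical input $\reg(C_5)=3$, which cannot be read off from the matching sandwich $\nu_0(C_5)+1=2\leqslant\reg(C_5)\leqslant\nu(C_5)+1=3$ alone and must be fixed separately by the chordality criterion. The genuine difficulty lies entirely upstream, in the inductive arguments of Lemmas~\ref{C1} and \ref{C2}.
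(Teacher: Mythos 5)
Your proposal is correct and follows essentially the same route as the paper: reduce to connected components via Lemma~\ref{Comp}, then apply Lemma~\ref{C1} in the $C_5$-free case and Lemma~\ref{C2} when a pentagon is present. Your write-up is in fact more complete than the paper's own three-line proof, which leaves implicit both the term-by-term component reduction and the entire ``if'' direction (the sandwich $\nu_0(G_i)+1 \leqslant \reg(G_i) \leqslant \nu(G_i)+1$ for Cameron--Walker components, and the verification $\reg(C_5)=3$ via the chordality criterion).
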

\begin{proof} By Lemma \ref{Comp} we may assume that $G$ is connected. If $G$ is $C_5$-free, then it is a Cameron-Walker graph by Lemma \ref{C1}. If $G$ has a cycle of length $5$, say $C_5$,  it is just $C_5$ by Lemma \ref{C2}, as required.
\end{proof}

We conclude the paper with an example to show that $\reg(G)=\nu_0(G)+1$ (resp. $\reg(G) = \cochord(G)+1$), in general, depends not only the structure of $G$ but also the characteristic of the based field $k$.

\begin{exm}\label{E1} Let $G$ be the graph $G_2$ in \cite[Apendix A]{K}, depicted in Figure $5$.

\medskip

\begin{center}

\includegraphics[scale=0.7]{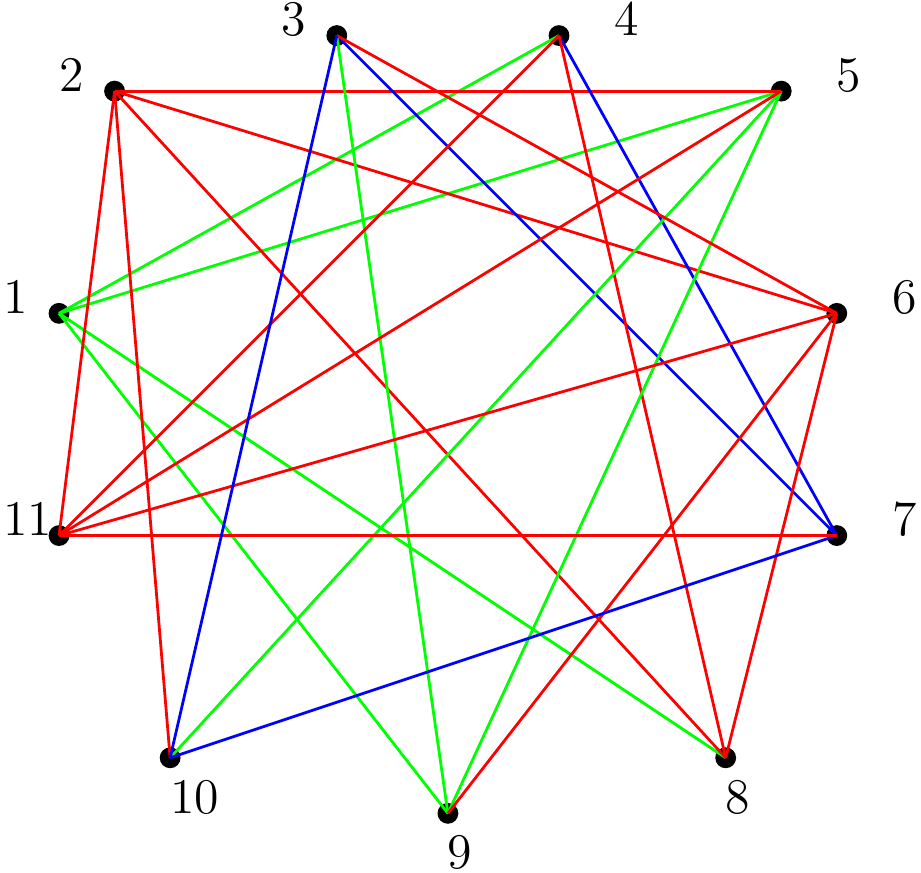}\\
\medskip

{\it Figure $5$.}
\end{center}
\end{exm}

\noindent Then, Macaulay $2$ (see \cite{GS}) computations show that:
\begin{enumerate}
\item If $\ch(k) \ne 2$, then $\reg(G)=3$.
\item If $\ch(k) = 2$, then $\reg(G) = 4$.
\end{enumerate}

We now claim that $\nu_0(G) = 2$ and $\cochord(G) = 3$. Indeed, take $k$ to be a field with $\ch(k)=0$ so that $\reg(G) = 3$. From \cite[Lemma 2.2]{K}, we obtain $\nu_0(G) \leqslant \reg(G)-1 = 2$. Observe that $\{\{1,4\}, \{3,10\}\}$ is an induced matching of $G$, so $\nu_0(G)\geqslant 2$. It follows that $\nu_0(G)=2$.
Next, take $k$ to be a field with $\ch(k)=2$ so that  $\reg(G) = 4$. By Lemma \cite[Theorem $1$]{W}, $\cochord(G) \geqslant \reg(G)-1 = 3$. On the other hand, we have three co-chordal subgraphs $G_1$, $G_2$ and $G_3$ of $G$ that cover the edges of $G$; where we color the edges of $G_1$ by red, the edges of $G_2$ by green, and the edges of $G_3$ by blue. Hence, $\cochord(G) \leqslant 3$, and hence $\cochord(G) = 3$, as claimed.

Thus,
\begin{enumerate}
\item $\reg(G) = \nu_0(G)+1$ if and only if $\ch(k) \ne 2$.
\item $\reg(G) = \cochord(G)+1$ if and only if $\ch(k)=2$.
\end{enumerate}

\subsection*{Acknowledgment}  This work is partially supported by NAFOSTED (Vietnam) under the grant number 101.04 - 2018.307. Part of this work was done while I was at the Vietnam Institute of Advanced Studies in Mathematics (VIASM) in Hanoi, Vietnam. I would like to thank VIASM for its hospitality.

\end{document}